%% Corrected June 2026
%%
%% REVISION DECEMBER 2025
\documentclass[12pt]{article}
\usepackage{amsmath,amssymb,amsthm,natbib,xcolor,graphicx}
\usepackage[utf8]{inputenc}
\usepackage[english]{babel}
\usepackage[a4paper,portrait, margin=25mm]{geometry}
\usepackage{hyperref}

\def\P{\mathbb{P}}
\def\R{\mathbb{R}}
\def\Z{\mathbb{Z}}
\def\E{\mathbb{E}}

\def\ds{\displaystyle}
\def\l{\lambda}
\newtheorem{thm}{Theorem}[section]
\newtheorem{rema}[thm]{Remark}
\newtheorem{cor}[thm]{Corollary}
\newtheorem{lemma}[thm]{Lemma}

\newcommand{\stir}[1]{\sqrt{2\pi{#1}}\left(\frac{#1}{e}\right)^{#1}}
\newcommand{\stirp}[1]{\sqrt{2\pi{(#1)}}\left(\frac{#1}{e}\right)^{#1}}

\author{Stanislav Volkov\thanks{Lund University, Centre for Mathematical Sciences,
Lund, SE-22100, Sweden} \and Magnus Wiktorsson\footnotemark[2]}
\title{First to reach $n$ game\thanks{This version corrects a few minor typos which were in the published version.}}
\begin{document}
\maketitle

\begin{abstract}
We consider a game with two players, consisting of a number of rounds, where the player who is first to win \(n\) rounds becomes the overall winner. Who wins each individual round is governed by a certain urn having two types of balls (type 1 and type 2). At each round, we randomly pick a ball from the urn, and its type determines which of the two players wins. We study the game under three regimes: in the first regime, the round outcomes are independent with fixed probabilities; in the second regime, probabilities are governed by a P\'olya urn; finally, in the third regime, balls are sampled without replacement. We obtain the properties of the random variables equal to the properly defined overall net profits of the players, and the results are drastically different in all three regimes.
\\[.3cm]\textbf{Keywords:} Gambler's ruin problem, Rubin's construction, P\'olya's urn
\\[.3cm]\textbf{AMS 2010 subject classifications: 60C05, 60J10, 91A05}
\end{abstract}

\section{Introduction}
Several real-life games and sports use a format that can be viewed as the ``first to reach $n$ wins'' rule. Under this rule, two players play a number of rounds, such that there is a clear winner in each round.  The match continues until one player wins exactly $n$ rounds (or sets), at which point the game ends immediately. The person who won $n$ rounds is declared the overall winner and gets a certain reward. This structure is common in fighting games like {\em Street Fighter} or {\em Mortal Kombat}, racket sports such as tennis, table tennis, and badminton, as well as in some chess championship formats where the first player to win a set number of games is declared the winner. Unlike games with a fixed number of rounds, these formats feature a variable-length match with a clear win condition based on reaching a target number of victories, making them natural real-world analogues of the probabilistic game model described below, where the game continues until one side reaches $n$ wins.

Somewhat relevant models were considered in~\cite{Antal} and~\cite{Svante}. In~\cite{Antal}, the authors studied when the number of balls in the P\'olya's urn becomes equal, and~\cite{Svante} investigated the question of ``eternal lead'' of one of the players. A very recent paper by~\cite{Svante2} (published after we submitted ours) addresses some other aspects of a similar model.

Our models also naturally fall in the general category of time-inhomogeneous Markov chains, see e.g.~\cite{EV}.

The formal description of our initial model is as follows. Given a positive integer~$n$, two players engage in the following game, consisting of at most $2n-1$ rounds.  In round $i=1,2,3,\dots$, Player 1 wins with probability $p_i\in(0,1)$, and Player 2 wins with probability $q_i=1-p_i$. The game continues until the first time that one of the players has won exactly~$n$ rounds. The first player to reach $n$ wins is declared the overall winner and receives a reward (from the other player) equal to the difference between their number of wins (which is $n$ for the overall winner) and the number of rounds won by the losing opponent, denoted by $W_{n,p}\in\{0,1,\dots, n-1\}$. Let $Z_{n,p}\in[-n,n]$ be the (random) net profit of Player 1 as a function of $n$ and $p$. Note that 
$$
Z_{n,p}=(n-W_{n,p}) \mathbf{1}_{\{\text{Player 1 won}\}}+(W_{n,p}-n) \mathbf{1}_{\{\text{Player 2 won}\}}.
$$
The goal of this paper is to study the properties of $W_{n,p}$ and $Z_{n,p}$.

We will consider {\bf three} versions of the above setup. In the first model (``Constant model'') $p_i\equiv p$ is the same for all rounds. The second model (``P\'olya model'') assumes that the probabilities of winning are governed by a P\'olya urn and thus the probability of winning a round for a player increases with the number of wins so far by that player.  The third model (``Anti-OK Corral model'') assumes precisely the opposite. Detailed descriptions of the models will be given further in the paper.

The first model, with constant $p_i$, provided the original motivation for this paper and contains our main results.

\section{Constant model}
In this version of the model, we assume that $p_i\equiv p$ and $q_i\equiv q=1-p$ for all $i$. Note that this model can be viewed as an ``asymptotic'' urn model described in the abstract, with very large initial numbers of balls, $N_1$ and $N_2$, such that $N_1/N_2\approx p/q$. Formally, if $N_1=\lfloor pt\rfloor$ and $N_2=\lfloor qt\rfloor$, as $t\to+\infty$, we obtain the current setup.

A similar model was also studied in~\cite{Bratislava}. In that paper, the game setup remained the same, but one player had the option to stop the game before playing all rounds. The authors were interested in finding the optimal strategy to maximise the expected profit.

\subsection{Martingale approach}
Before stating the exact results, we want to provide some intuition behind them. The intuition is based on constructing a certain martingale, which is a quite usual technique in such models (see e.g.~\cite{King}).

Let $X_k$ be the current difference between wins of the first and the second player after round~$k$, with $X_0=0$. Then $M_k=X_k-\mu k$, where $\mu=p-q$,  is a martingale. Letting $\tau$ be the stopping time when $|X_k|$ hits the ``line'' $2n-k$ for the first time. 
This can at the latest happen when $k=2n-1$. Since $\tau\le 2n-1$ using the Optional Stopping Theorem we get
$$
0=M_0=\E M_\tau=\E X_\tau-\mu \E\tau
$$
where $\tau$ is the index of the last round. On the other hand,
$|X_\tau|=n-(\tau-n)$, implying
$$
\E |X_\tau|=2n-\E\tau, \quad \E |X_\tau|\ge |\E X_\tau|=|\mu| \E\tau
$$
yielding
\begin{align}\label{eqmart}
\E\tau \le \frac{2n}{1+|\mu|}
\quad\Longrightarrow\quad
|\E X_\tau| \le \frac{2n|\mu |}{1+|\mu|}=\frac{n|\mu|}{\max(p,q)}.    
\end{align}
When $p>q$, giving $|\mu|=\mu$, and $n$ is large, it seems quite unlikely that the second player might reach $n$ wins before the first one does. As a result, with probability close to one $X_\tau$ will be positive, and hence $\E |X_\tau|\approx \E X_\tau$, so that we can expect that the inequalities in~\eqref{eqmart} become asymptotically equalities.

Another useful martingale is available in the case $p=q=1/2$. In this case, $\tilde M_k=X_k^2-k$ is also a martingale, hence
$$
0=\tilde M_0=\E \tilde M_\tau=\E X_\tau^2-\E\tau
$$
On the other hand, $|X_\tau|=2n-\tau$ as before, hence, by Jensen's inequality
$$
0=4n^2-4n\E\tau +\E(\tau^2)-\E\tau\ge  4n^2-(4n+1)\E\tau +(\E \tau)^2
$$
yielding
\begin{align}\label{Mart2}
\E\tau\ge 2n-\frac{\sqrt{8n+1}-1}{2} \quad\Longrightarrow\quad
\E|X_\tau|\le \frac{\sqrt{8n+1}-1}{2}.    
\end{align}

\subsection{Main results}
Now, note that by symmetry it suffices to consider only the case $p\ge 1/2$, so we will assume this in the rest of the section. Denote by $\mu=p-q\ge 0$ the expected gain of one round.
The following statement gives an exact explicit formula for $E_{n,p}$.
\begin{thm}\label{T1}
 Let  $E_{n,p}=\E Z_{n,p}$ and $z=pq$. Then 
\begin{align}
   E_{n,p} =n\mu \,\sum_{j=0}^{n-1} C_j z^j
\end{align}
where $C_j=\frac{(2j)!}{j!(j+1)!}$ are {\em Catalan numbers}.
\end{thm}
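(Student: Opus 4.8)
The plan is to reduce the computation of $E_{n,p}$ to that of $\E\tau$, the expected number of rounds, and then to evaluate $\E\tau$ by enumerating terminal configurations. First I would observe that the net profit coincides with the terminal value of the difference walk: at time $\tau$ the winner has $n$ wins and the loser has $\tau-n=W_{n,p}$ wins, so that $Z_{n,p}=X_\tau$, with $X_\tau=+(2n-\tau)$ if Player~1 wins and $X_\tau=-(2n-\tau)$ if Player~2 wins. Hence the Optional Stopping identity already recorded above, $\E X_\tau=\mu\,\E\tau$, gives $E_{n,p}=\mu\,\E\tau$, and it therefore suffices to prove that $\E\tau=n\sum_{j=0}^{n-1}C_j z^j$.

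Next I would compute $\E\tau$ directly. The game lasts exactly $n+k$ rounds and is won by Player~1 precisely when Player~1 takes the final round after winning $n-1$ of the first $n+k-1$ rounds, an event of probability $\binom{n+k-1}{k}p^n q^k$; the symmetric expression $\binom{n+k-1}{k}q^n p^k$ covers the wins of Player~2. Weighting each terminal configuration by its duration $n+k$ and summing over $k=0,\dots,n-1$ yields
\begin{equation*}
\E\tau=\sum_{k=0}^{n-1}(n+k)\binom{n+k-1}{k}\bigl(p^n q^k+q^n p^k\bigr).
\end{equation*}
Using the elementary identity $(n+k)\binom{n+k-1}{k}=n\binom{n+k}{k}$, this reduces the theorem to the purely combinatorial claim
\begin{equation*}
\sum_{k=0}^{n-1}\binom{n+k}{k}\bigl(p^n q^k+q^n p^k\bigr)=\sum_{j=0}^{n-1}C_j z^j,\qquad z=pq. \tag{$\dagger$}
\end{equation*}

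To establish $(\dagger)$ I would first turn each one-sided partial sum into a binomial tail probability: since $\binom{n+k}{k}p^{n+1}q^k$ is the probability that Player~1's $(n+1)$-st win occurs at round $n+1+k$, summing over $k=0,\dots,n-1$ gives $p\sum_k\binom{n+k}{k}p^n q^k=\P(\mathrm{Bin}(2n,p)\ge n+1)$, and symmetrically for the other sum. The bracket in $(\dagger)$ thus becomes the manifestly symmetric expression $\sum_{i=0}^{n-1}\binom{2n}{i}\bigl(p^{2n-1-i}q^i+p^i q^{2n-1-i}\bigr)$, which is a symmetric polynomial in $p,q$ and hence (via $p+q=1$) a genuine polynomial in $z$. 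I would then prove $(\dagger)$ by induction on $n$, the target increment being $C_{n-1}z^{n-1}$: applying Pascal's rule $\binom{2n}{i}=\binom{2n-2}{i}+2\binom{2n-2}{i-1}+\binom{2n-2}{i-2}$ and regrouping (using $p+q=1$ to absorb the extra factors of $p,q$) should recover the $(n-1)$ sum together with a single leftover term equal to $C_{n-1}z^{n-1}$. The main obstacle is exactly this last step: verifying that the Pascal expansion telescopes to precisely the Catalan increment and that the boundary indices $i=0$ and $i=n-1$ behave correctly. As a consistency check on the whole computation, letting $n\to\infty$ with $p>q$ sends the two tail probabilities to $1$ and $0$, so the bracket tends to $1/p=\sum_{j\ge0}C_j z^j$, which matches the known Catalan generating function since $\sqrt{1-4z}=|p-q|$.
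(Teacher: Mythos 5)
Your proposal is correct, and it takes a genuinely different route from the paper's proof. The paper computes $E_{n,p}$ head-on from the terminal distribution: it writes $p^nq^k-q^np^k=z^k(p-q)A_{n-k}$ with $A_m=(p^m-q^m)/(p-q)$ expanded as a polynomial in $z$, interchanges the order of summation, and then needs a nontrivial alternating binomial identity (Lemma~\ref{lemmain}, proved via generating functions) to produce the Catalan numbers. You instead factor out $\mu$ at the outset via optional stopping: since $Z_{n,p}=X_\tau$ and $\tau\le 2n-1$ is bounded, $E_{n,p}=\E X_\tau=\mu\,\E\tau$, reducing the theorem to the exact identity $\E\tau=n\sum_{j=0}^{n-1}C_jz^j$ (for $p=1/2$ the theorem is the trivial $0=0$, so nothing is lost in the reduction, and your identity for $\E\tau$ holds there too). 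Your duration formula, the reduction $(n+k)\binom{n+k-1}{k}=n\binom{n+k}{k}$, and the conversion of the one-sided sums into binomial tails $\tfrac1p\,\P(\mathrm{Bin}(2n,p)\ge n+1)$ and $\tfrac1q\,\P(\mathrm{Bin}(2n,q)\ge n+1)$ are all correct, as is the limiting consistency check (indeed $\sqrt{1-4z}=|p-q|$ gives $c(z)=1/\max(p,q)$). What your route buys: it avoids the paper's alternating-sum lemma, works only with nonnegative terms, and yields as a by-product the exact formula for $\E\tau$, turning the inequality~\eqref{eqmart} of Section~2.1 into an identity. What the paper's route buys: it is a single self-contained computation that never invokes the martingale machinery.

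The one step you flag as an obstacle does go through, and cleanly. Writing $S_n=\sum_{i=0}^{n-1}\binom{2n}{i}\bigl(p^{2n-1-i}q^i+p^iq^{2n-1-i}\bigr)$, homogeneity forces the induction claim to be $S_n=(p+q)^2S_{n-1}+C_{n-1}\,p^{n-1}q^{n-1}(p+q)$, i.e.\ the Catalan increment must itself be homogenized by a factor $(p+q)$. Expanding $(p+q)^2S_{n-1}$ and comparing coefficients monomial by monomial: for the monomial $p^{2n-1-j}q^j$ with $0\le j\le n-2$, the coefficients match $\binom{2n}{j}$ by the double Pascal rule $\binom{2n}{j}=\binom{2n-2}{j}+2\binom{2n-2}{j-1}+\binom{2n-2}{j-2}$, with out-of-range indices contributing zero at $j=0,1$; at the two middle monomials $p^nq^{n-1}$ and $p^{n-1}q^n$, the index $i=n-1$ is absent from $S_{n-1}$ while one cross term from the reflected half of $S_{n-1}$ appears, leaving a deficit of exactly $\binom{2n-2}{n-1}-\binom{2n-2}{n-2}=C_{n-1}$ in each, which is precisely what $C_{n-1}p^{n-1}q^{n-1}(p+q)=C_{n-1}\bigl(p^nq^{n-1}+p^{n-1}q^n\bigr)$ supplies. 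It is worth noting that this closing identity, $C_m=\binom{2m}{m}-\binom{2m}{m-1}$, is exactly the identity with which the paper's proof of Lemma~\ref{lemmain} concludes, so the two arguments ultimately hinge on the same Catalan fact, reached by quite different paths.
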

\begin{rema}
 Catalan numbers have the generating function    
$$
c(z)=\sum_{j=0}^\infty C_j z^j=\frac{1-\sqrt{1-4z}}{2z}
=1+z+2z^2+5z^3+14z^4+42z^5+132z^6+\dots.
$$
hence $E_{n,p}/(n\mu)$ are the first $n$ terms in the Taylor expansion of $c(z)$.
\end{rema}
\begin{cor}
We have 
$$
\lim_{n\to \infty} \lim_{p\downarrow \frac12}  \frac{E_{n,p}}{n\mu}=2.
$$
\end{cor}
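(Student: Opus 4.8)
The plan is to read off from Theorem~\ref{T1} the clean identity
$$
\frac{E_{n,p}}{n\mu}=\sum_{j=0}^{n-1}C_j z^j,\qquad z=pq,
$$
so that both nested limits act on a single power series with nonnegative coefficients. The inner limit $p\downarrow\frac12$ is then immediate: for each fixed $n$ the right-hand side is a polynomial in $z$, hence continuous, and $z=p(1-p)\uparrow\frac14$ as $p\downarrow\frac12$ (recall $pq<\frac14$ for $p>\frac12$). Consequently
$$
\lim_{p\downarrow\frac12}\frac{E_{n,p}}{n\mu}=\sum_{j=0}^{n-1}C_j\,4^{-j}.
$$

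This reduces the claim to computing the outer limit $\lim_{n\to\infty}\sum_{j=0}^{n-1}C_j4^{-j}$, i.e.\ the Catalan generating function $c(z)$ evaluated at the boundary point $z=\frac14$ of its disc of convergence. First I would confirm that the numerical series converges. By Stirling's approximation one obtains the standard asymptotics $C_j=\frac{(2j)!}{j!(j+1)!}\sim \pi^{-1/2}\,4^{j}\,j^{-3/2}$, so that $C_j4^{-j}\sim \pi^{-1/2}j^{-3/2}$, which is summable because the exponent $\tfrac32>1$. In particular the radius of convergence of $c$ is exactly $\frac14$, and the partial sums $\sum_{j=0}^{n-1}C_j4^{-j}$ form an increasing, bounded sequence, so the outer limit exists.

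It remains to identify its value. Since $c(z)=\sum_j C_j z^j$ has radius of convergence $\frac14$ and, by the Stirling estimate above, converges at $z=\frac14$, Abel's theorem lets me pass the limit through the series: its sum equals $\lim_{z\uparrow 1/4}c(z)$. Using the closed form from the Remark, $c(z)=\frac{1-\sqrt{1-4z}}{2z}$ extends continuously to $z=\frac14$ with $c(\tfrac14)=\frac{1-0}{1/2}=2$, so combining the two limits gives
$$
\lim_{n\to\infty}\lim_{p\downarrow\frac12}\frac{E_{n,p}}{n\mu}=\sum_{j=0}^{\infty}C_j4^{-j}=2,
$$
as claimed. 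The only genuinely delicate point is that $z=\frac14$ lies exactly on the boundary of convergence, so the closed-form formula cannot be substituted naively; the Stirling estimate (guaranteeing convergence at the boundary) together with Abel's theorem (licensing the interchange of limit and summation) is precisely what makes this boundary evaluation legitimate, while every other step is a direct substitution or continuity argument.
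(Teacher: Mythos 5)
Your proof is correct and takes essentially the same route the paper intends: the corollary is stated there without a separate proof, as an immediate consequence of Theorem~\ref{T1} together with the remark that $E_{n,p}/(n\mu)$ is the $n$-th partial sum of the Catalan generating function evaluated at $z=pq\uparrow\tfrac14$, and your Stirling-plus-Abel argument supplies exactly the boundary-point justification of $c(\tfrac14)=2$ that the paper leaves implicit. (One small simplification: since the terms $C_j4^{-j}$ are nonnegative, monotone convergence alone gives $\sum_{j=0}^{\infty}C_j4^{-j}=\lim_{z\uparrow 1/4}c(z)=2$ in $[0,\infty]$, so neither the Stirling estimate nor Abel's theorem is actually needed.)
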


We also want to study the asymptotic behaviour of $W_{n,p}$ and $Z_{n,p}$ as $n\to\infty$. 
\begin{thm}\label{T2}
We have
\begin{itemize}
\item[(a)] For  all $0\le k<n$
$$
\P(W_{n,p}=k)=\binom{n+k-1}{n-1}q^k p^n+\alpha_{n,p,k}
$$
where $|\alpha_{n,p,k}|\le (4pq)^n$.
\item[(b)] As $n\to\infty$,
$$
\frac{pZ_{n,p}-\mu n}{\sqrt{nq}}\to \mathcal{N}(0,1) 
\text{ in distribution.}
$$
\end{itemize}
\end{thm}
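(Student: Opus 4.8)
I would obtain the exact distribution of $W_{n,p}$ by decomposing the event $\{W_{n,p}=k\}$ according to which player wins the match. If Player~1 is the overall winner and the loser (Player~2) holds $k$ wins, the match lasts exactly $n+k$ rounds, the last of which Player~1 wins, while among the first $n+k-1$ rounds Player~1 takes $n-1$ and Player~2 takes $k$; this has probability $\binom{n+k-1}{n-1}p^{n}q^{k}$. The mirror-image case, with Player~2 the overall winner and Player~1 the loser holding $k$ wins, contributes $\binom{n+k-1}{n-1}q^{n}p^{k}$. Summing the two disjoint cases gives the exact identity $\P(W_{n,p}=k)=\binom{n+k-1}{n-1}\bigl(p^{n}q^{k}+q^{n}p^{k}\bigr)$, so the first summand is the claimed main term and $\alpha_{n,p}=\binom{n+k-1}{n-1}q^{n}p^{k}$. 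To bound the error I would use the crude estimate $\binom{n+k-1}{n-1}\le 2^{n+k-1}$, which gives $\alpha_{n,p}\le \tfrac12(2p)^{k}(2q)^{n}$; since $p\ge\tfrac12$ forces $2p\ge1$ and hence $(2p)^{k}\le(2p)^{n}$ for $k\le n-1$, this is at most $\tfrac12(4pq)^{n}\le(4pq)^{n}$, as required.

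\textbf{Part (b).} The plan is to realise the whole match on a single infinite i.i.d.\ sequence of rounds and to replace $Z_{n,p}$ by an untruncated negative binomial that is easier to analyse. Let $\tilde W$ be the number of rounds won by Player~2 before Player~1 accumulates her $n$-th win. Then $\tilde W$ has exactly the negative binomial law $\P(\tilde W=k)=\binom{n+k-1}{n-1}p^{n}q^{k}$ appearing as the main term of part~(a), and on the event $A_{n}=\{\tilde W\le n-1\}$ --- which is precisely the event that Player~1 wins the match --- we have $Z_{n,p}=n-\tilde W$. By part~(a) one has $\P(A_{n}^{c})=\P(\text{Player 2 wins})\le n(4pq)^{n}\to0$, where the assumption $p>\tfrac12$ (so that $4pq<1$) is used.

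Next I would invoke the classical central limit theorem. Writing $\tilde W=\sum_{i=1}^{n}G_{i}$ as a sum of i.i.d.\ geometric variables with $\P(G=j)=pq^{j}$, $\E G=q/p$ and $\Var G=q/p^{2}$, the CLT yields $\frac{p\tilde W-nq}{\sqrt{nq}}\to\mathcal N(0,1)$. Setting $\tilde Z=n-\tilde W$ and recalling $\mu=p-q$, a one-line computation gives $p\tilde Z-\mu n=qn-p\tilde W=-(p\tilde W-nq)$, so $\frac{p\tilde Z-\mu n}{\sqrt{nq}}\to\mathcal N(0,1)$ by symmetry of the limit. Finally, since $Z_{n,p}$ and $\tilde Z$ agree on $A_{n}$ and $\P(A_{n}^{c})\to0$, the two normalised quantities differ only on an event of vanishing probability, so their difference tends to $0$ in probability; Slutsky's theorem then transfers the limit to $\frac{pZ_{n,p}-\mu n}{\sqrt{nq}}$.

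\textbf{Main obstacle.} The only genuine difficulty is the truncation: the match can terminate with Player~2 winning, which destroys the exact negative-binomial description of $Z_{n,p}$ itself. The coupling to the untruncated $\tilde W$, together with the exponential bound supplied by part~(a), resolves this cleanly, and it simultaneously explains why $p>\tfrac12$ is essential --- at $p=\tfrac12$ the truncation is not negligible (both players win with probability $\tfrac12$), and, as the martingale computation in~\eqref{Mart2} suggests, the correct scale there is $\sqrt n$ with a non-Gaussian symmetric limit.
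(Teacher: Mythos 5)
Your part (a) is correct and in fact more elementary than the paper's: you derive the exact identity $\P(W_{n,p}=k)=\binom{n+k-1}{n-1}\bigl(p^nq^k+q^np^k\bigr)$ by direct counting (the same combinatorial formula the paper uses in its proof of Theorem~\ref{T1}), and your crude estimate $\binom{n+k-1}{n-1}\le 2^{n+k-1}$ together with $p\ge\tfrac12$ legitimately yields $|\alpha_{n,p}|\le\tfrac12(4pq)^n$; the paper instead obtains the same bound by embedding the game into two independent Poisson processes and applying a Chernoff estimate (Lemma~\ref{lemMarkov}(a)) to $\P(\tau_X\ge\tau_Y)$. Your part (b) for $p>\tfrac12$ is also sound and runs parallel to the paper's argument: CLT for the untruncated negative binomial loser count, plus negligibility of the truncation event, plus Slutsky.

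The genuine gap is the case $p=\tfrac12$. The standing assumption of the section is $p\ge\tfrac12$, so part (b) of the theorem includes $p=\tfrac12$, where it asserts $Z_{n,1/2}/\sqrt{2n}\to\mathcal{N}(0,1)$ (note $\frac{pZ_{n,p}-\mu n}{\sqrt{nq}}=\frac{Z_{n,1/2}}{\sqrt{2n}}$ there); Remark~\ref{rem5} explicitly extracts from this case the $2\sqrt{n/\pi}$ asymptotics for the winner's profit. Your concluding claim that at $p=\tfrac12$ the limit is ``non-Gaussian'' is false, and your proof covers none of this case, since $\P(A_n^c)\to\tfrac12$ rather than $0$, so the truncation cannot be discarded. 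The paper resolves it by writing, in the Poisson embedding, $Z_{n,p}=n-Y(\tau_X)+\zeta_n\mathbf{1}_{\tau_X\ge\tau_Y}$ with $\zeta_n=X(\tau_Y)+Y(\tau_X)-2n$, and proving $\zeta_n/\sqrt n\to 0$ in probability: by the strong Markov property, conditionally on $\nu=n-X(\tau_Y)\ge 1$ the overshoot $Y(\tau_X)-n$ is negative binomial with parameters $\tfrac12$ and $\nu$, and the concentration bound of Lemma~\ref{lemMarkov}(b) then gives $\zeta_n=O(n^{2/5})$ with probability tending to one. In your coupling language: on the event that Player 2 wins, $Z_{n,p}$ is not $n-\tilde W$, but it differs from $n-Y(\tau_X)$ only by the small correction $\zeta_n$, so the single statistic $n-Y(\tau_X)$ (negative binomial with mean $n$ and variance $2n$) approximates $Z_{n,p}$ on both events and the Gaussian limit survives. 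The martingale computation~\eqref{Mart2} only bounds $\E|X_\tau|$ by $O(\sqrt n)$ and does not suggest non-Gaussianity; supplying this $p=\tfrac12$ argument (or an equivalent one) is necessary to prove the theorem as stated.
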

\begin{rema}
While  part (a) of Theorem~\ref{T2} formally applies to the case $p=1/2$, it is meaningful only  for $p\in(1/2,1)$.
\end{rema}
\begin{rema}\label{rem3}
Part (b) of Theorem~\ref{T2} immediately implies that $\frac{Z_{n,p}}n\to 1-\frac qp$ in probability.
\end{rema}

\begin{rema}\label{rem5}
Part (b) of Theorem~\ref{T2}, together with the tail estimate in Lemma~\ref{lemMarkov}, implies  that in case $p=q=1/2$ the expected profit of the winning player is asymptotically $2\sqrt{n/\pi}\approx 1.13 \sqrt{n}$ (compare this with~\eqref{Mart2}).
\end{rema}

\section{P\'olya's model}
In this version of the game, we assume that we have a classical P\'olya urn initially having $N_i\ge 1$ balls of type $i=1,2$. At each round, we take a ball from the urn at random, and then return it to the urn with one more ball of the same type. The type of ball drawn determines which of the two players wins the round.

\begin{thm}\label{thPolya}
In the P\'olya model, for fixed \(N_1,N_2\ge1\), as the target level \(n\to\infty\),
% corrected to be the limit
$$
\P(\text{Player 1 wins overall})\to
\frac {\Gamma(N_1+N_2)}{\Gamma(N_1)\Gamma(N_2)}\,
\int_0^1 \frac {(1-x)^{N_2-1}}{(2-x)^{N_1+N_2}}\,dx.
$$
\end{thm}
\begin{proof}

Recall that the P\'olya's urn model can be realised as follows.  Let $\xi\sim \mathrm{Beta}(N_1,N_2)$ and then, given $\xi$, at each round draw a ball of type 1 (2 respectively) with probability $\xi$ ($1-\xi$ respectively), independently of the past. Then at each round the distribution of balls will be exactly as in the P\'olya's urn. Hence, by Theorem~\ref{T2}, part (b), and by symmetry when \(\xi<1/2\), taking into account that we are no longer guaranteed  the first player winning,
$$
\P\left(\frac{\max(\xi,1-\xi)\, \tilde Z_{n} -n(2\xi-1)   }{\sqrt{n\min(\xi,1-\xi)}}\le a \mid \xi \right)\to \Phi(a)
$$
where $\tilde Z_{n}$ is the overall signed win of the first player.
In particular, this implies that conditionally on~$\xi$,
$$
\frac{\tilde Z_n}{n}\to \frac{2\xi-1}{\max(\xi,1-\xi)}=
\begin{cases}
    \frac{\xi}{1-\xi}-1, &\text{if }\xi<1/2;\\
    1-\frac{1-\xi}{\xi}, &\text{if }\xi\ge 1/2
\end{cases}
$$
(see Remark~\ref{rem3}), hence $\tilde Z_n/n$ converges (unconditionally) in distribution 
to a random variable $\zeta$ with the density
$$
f_\zeta(x)=
\frac{\Gamma(N_1+N_2)}{\Gamma(N_1)\Gamma(N_2)\, (2-|x|)^{N_1+N_2}}\cdot \begin{cases}
(1-|x|)^{N_2-1},&\text{ if }x\in[0,1];\\
(1-|x|)^{N_1-1},&\text{ if }x\in[-1,0].
\end{cases}
$$
Since the limiting random variable \(\zeta\) has no atom at \(0\), the convergence
in distribution implies
\[
\P(\tilde Z_n>0)\to \P(\zeta>0)=\int_0^1 f_\zeta(x)\,dx,
\]
yielding the statement of the Theorem.
\end{proof}

\begin{rema}
For fixed \(r\ge 2\), suppose that \(N_1=N_2=r\), and let the target level
\(n\to\infty\). Since \(|Z_n|/n\le 1\), the convergence in distribution above
also gives convergence of the normalized expected profit of the winning player.
Thus the limiting expected profit of the winning player, divided by \(n\), is
\[
\int_{-1}^1 |x|f_{\zeta,r}(x)\,dx
=
\frac{2(2r-1)!}{(r-1)!^2}
\int_0^1
x\frac{(1-x)^{r-1}}{(2-x)^{2r}}\,dx =
\frac{2}{r-1}
\left(
\frac{(2r-1)!}{(r-1)!^2 2^{2r-1}}-\frac12
\right).
\]
As \(r\to\infty\), this becomes $\frac{2}{\sqrt{\pi r}}+O(r^{-1}).$
Therefore, for fixed \(r\) and target level \(n\to\infty\), the asymptotic expected profit is $\frac{2n}{\sqrt{\pi r}}+O(n/r)$ (compare this with Remark~\ref{rem5}.)
\end{rema}

\section{Anti-OK Corral model}
This version of the model is also governed by an urn. Suppose that there are initially~$n$ balls of each type ($1,2$) in the urn. In each round, we take a ball at random {\em without replacement}, and the type of ball determines who wins the round. The person who first gets $n$ balls of its type (that is, has won $n$ rounds) wins the game. 

In contrast to the original OK Corral model studied in~\cite{King,KV,WM}, where a team with more remaining ``fighters'' gains an advantage, here the player with fewer remaining balls is more likely to be the winner. For this reason we refer to this model as an ``anti-OK Corral'' model.

\begin{thm}\label{thAntiOK}
In the anti-OK Corral model, for each fixed $k\ge 1$, as $n\to\infty$, the probability that a specified player, say Player 1,  wins overall with the net profit of $k$, and the other player wins $n-k\in\{0,1,\dots,n-1\}$ converges to $\frac1{2^{k+1}}$. As a result, the limiting distribution of the net profit of a player is an equal mixture of two Geometric ($1/2$) distributions supported on positive integers.
\end{thm}

Note that it is possible to construct a Markov process that resembles the random walk for the constant case. The issue is that the transition probabilities become both state and time-dependent.  Let \(D_k\) be the difference between the numbers of remaining balls of type one and type two after \(k\) draws. Thus $D_0=0$. Now we have
\begin{eqnarray*} P(D_{k+1}=1+D_k|D_k)&=&\frac{1}{2}\left(1-\frac{D_k}{2n-k}\right),\\
P(D_{k+1}=-1+D_k|D_k)&=&\frac{1}{2}\left(1+\frac{D_k}{2n-k}\right).
\end{eqnarray*}
This is, in fact, equivalent to a Markov representation of a simple random walk bridge on the time interval $0$ to $2n$. We now run this process up to the stopping time $\tau^{(n)}$ when $|D_k|$ hits the ``line'' $2n-k$ for the first time. Since the total number of balls remaining at time $k$ is $2n-k$, it is evident that all remaining balls must be of the same type at the first time $|D_k|=2n-k$ and thus all of the $n$ balls of one of the two types have been taken. If $D_{\tau^{(n)}}=2n-\tau^{(n)}$ then all the remaining balls are of type one and if $D_{\tau^{(n)}}=-(2n-\tau^{(n)})$ then all remaining balls are of type two.

\begin{proof}[Proof of Theorem~\ref{thAntiOK}]
We can solve the problem by direct combinatorial calculations. The probability that the first player wins overall and the other player wins $n-k\in\{0,1,\dots,n-1\}$ rounds is
\begin{equation}
\frac{\ds\frac{n!}{(n-1)!}\frac{n!}{(n-k)!}}{\ds \frac{(2n)!}{((n-1)+(n-k))!}}
=\frac{\ds n (2n-1-k)! n!}{\ds (2n)!(n-k)!} .  \label{Eq:urn}
\end{equation}
In particular, if $k=1$, \eqref{Eq:urn} becomes 
$$
\frac{n(2n-2)!n!}{(2n)!(n-1)!}=\frac{1}{2}\frac{n}{2n-1}=\frac14+o(1).
$$
By Stirling's formula, the expression in \eqref{Eq:urn} is asymptotically equal to
 \begin{align*}
 &\frac{n\stir{n}\stirp{2n-1-k}}{\stirp{n-k}\stir{2n}}\\
 &=\sqrt{\frac{n(2n-1-k)}{(n-k)2n}}\frac{e\left(1-\frac{k+1}{2n}\right)^{2n}}{\left(1-\frac{k}{n}\right)^{n}}2^{-k-1}\left(\frac{2n-2k}{2n-1-k}\right)^{k}\frac{2n}{2n-1-k}.
 \end{align*}
 As $n$ tends to infinity, this converges to  
 $$
 e\cdot \frac{e^{-k-1}}{e^{-k}}2^{-1-k}=2^{-k-1}
 $$
 as required.
 \end{proof}

\section{Remaining proofs}

\begin{proof}[Proof of Theorem~\ref{T1}] 
We assume \(p\ne q\); the case \(p=q=1/2\) then follows by continuity
(or directly, since both sides are zero). An easy calculation shows that
$$
E_{n,p} = \sum_{j = 0}^{n-1} \P(\text{Player 1 wins, }W_{n,p}=j) \cdot (n - j)
- \sum_{i = 0}^{n-1} \P(\text{Player 2 wins, }W_{n,p}=i) \cdot (n - i).
$$
Before the last round, the game has lasted \(n+k-1\) rounds, with the eventual winner having \(n-1\) wins and the opponent having \(k\) wins. The last round is then won by the eventual winner. The probability that the winning player whose probability of winning is~$\theta$ reaches $n$ wins while the other player has exactly $k$ wins equals
$$
h(\theta,k) = \binom{n + k - 1}{k} \cdot \theta^n \cdot (1 - \theta)^k,
$$
where $\theta=p$ or $\theta=1-p$. Then the expected net profit is:
\begin{align*}
E_{n,p} &= \sum_{k=0}^{n-1}(n - k)h(p,k)   
 - \sum_{k=0}^{n-1}(n - k)h(q,k)  
= \sum_{k=0}^{n-1}  \binom{n + k - 1}{k} \left( p^n q^k-q^n p^k   \right)(n - k)
\\ &=
\sum_{k=0}^{n-1}  \binom{n + k - 1}{k} (pq)^{k}\left( p^{n-k}-q^{n-k}   \right)(n - k)
\\ & =
(p-q)\sum_{k=0}^{n-1}  \binom{n + k - 1}{k} (n - k)(pq)^{k}A_{n-k}
\end{align*}
% wrong power corrected
where $A_m=\frac{p^m-q^m}{p-q}$. Then for $m=1,2,\dots$, noting that $p+q=1$,
$$
A_m=\sum_{i=0}^{\lfloor\frac{m-1}2\rfloor}\binom{m-1-i}{i}(-z)^i,\quad z=pq
$$
(this can be shown by induction as $A_{m+2}=A_{m+1}-zA_m$, and $A_1=A_2=1$).
Hence, with $z=pq$, using the change of order of summation, we have
\begin{align*}
\frac{E_{n,p}}{p-q}
&=\sum_{k=0}^{n-1}  \binom{n+k-1}{k} (n - k)z^k
\sum_{i=0}^{\lfloor\frac{n-k-1}2\rfloor}\binom{n-k-1-i}{i}(-z)^i
\\ &=
\sum_{k=0}^{n-1}  \binom{n-1+k}k(n- k)
\sum_{j=k}^{\lfloor\frac{n+k-1}2\rfloor}z^j\binom{n-1-j}{j-k}(-1)^{j-k}
\\ &=
\sum_{j=0}^{n-1} (-1)^j z^j 
\sum_{k=0}^{j}\binom{n-1+k}k \binom{n-1-j}{j-k}(-1)^k(n-k)
\\ &=
\sum_{j=0}^{n-1} (-1)^j z^j \times \frac{(-1)^j\ n}{j+1}\binom{2j}{j}
=n \sum_{j=0}^{n-1} \frac{ z^j}{j+1}\binom{2j}{j}
\end{align*}
where the penultimate equality is due to Lemma~\ref{lemmain}.
\end{proof}

\begin{lemma}\label{lemmain}
 \begin{align}\label{main}
 \sum_{k=0}^{j}\binom{n-1+k}k \binom{n-1-j}{j-k}(-1)^{k}
(n-k)=\frac{(-1)^j\ n}{j+1}\binom{2j}{j}=(-1)^j\ n\ C_j
 \end{align}
\end{lemma}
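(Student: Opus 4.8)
The plan is to prove the identity
\begin{align*}
\sum_{k=0}^{j}\binom{n-1+k}{k}\binom{n-1-j}{j-k}(-1)^{k}(n-k)=(-1)^j\,n\,C_j
\end{align*}
by splitting the left-hand side into two sums according to the factor $(n-k)=n-k$, writing it as
\begin{align*}
n\sum_{k=0}^{j}\binom{n-1+k}{k}\binom{n-1-j}{j-k}(-1)^{k}
-\sum_{k=0}^{j}k\binom{n-1+k}{k}\binom{n-1-j}{j-k}(-1)^{k}.
\end{align*}
Each of these is an alternating convolution of two binomial coefficients, so the natural tool is generating functions: I would recognize $\sum_{k\ge 0}\binom{n-1+k}{k}x^{k}=(1-x)^{-n}$ and $\sum_{\ell\ge 0}\binom{n-1-j}{\ell}(-1)^{\ell}x^{\ell}$ (with $\ell=j-k$) as partial data from $(1-x)^{\,n-1-j}$, and then extract the coefficient of $x^{j}$ from a product. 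The first sum is essentially $[x^{j}](1-x)^{-n}(1+x)^{\,n-1-j}$ after matching signs, but the mixed $(1-x)$ and $(1+x)$ factors are awkward, so I would instead keep both factors as powers of $(1-x)$ by substituting and absorbing the $(-1)^{k}$, reducing the first sum to $[x^{j}](1-x)^{-n}(1-x)^{\,n-1-j}=[x^{j}](1-x)^{-1-j}=\binom{2j}{j}$.

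For the second sum, the factor $k$ is handled by the operator $x\frac{d}{dx}$ applied to the generating function $(1-x)^{-n}$, which gives $\sum_k k\binom{n-1+k}{k}x^{k}=nx(1-x)^{-n-1}$; extracting the appropriate coefficient then yields a closed form of the same binomial type, which I expect to simplify to $n\binom{2j}{j}/(j+1)$ times a constant after combining with the prefactor $n$. The cleanest route may be to avoid calculus entirely and rewrite $k\binom{n-1+k}{k}=n\binom{n-1+k}{k-1}$ (valid since $k\binom{n-1+k}{k}=(n-1+k)\binom{n-2+k}{k-1}$ needs care, so I would double-check the exact absorption identity), turning the second sum into another pure binomial convolution that again collapses via $(1-x)^{-n}(1-x)^{\,n-1-j}$ after reindexing. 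Combining the two evaluations should give $n\binom{2j}{j}-\frac{nj}{j+1}\binom{2j}{j}=\frac{n}{j+1}\binom{2j}{j}$, matching the right-hand side up to the global sign $(-1)^{j}$ that comes from the parity conventions in the alternating sums.

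The main obstacle I anticipate is bookkeeping with signs and the upper summation limit. The coefficient $\binom{n-1-j}{j-k}$ has a negative top argument whenever $j>\tfrac{n-1}{2}$, so I must use the convention $\binom{m}{r}=\frac{m(m-1)\cdots(m-r+1)}{r!}$ consistently and let the generating-function identity $(1-x)^{\,n-1-j}=\sum_{\ell}\binom{n-1-j}{\ell}(-x)^{\ell}$ carry the signs automatically; the apparent restriction $k\le j$ is then enforced by the vanishing of coefficients beyond degree $j$ in the truncated expansion, and the $(-1)^{k}$ in the sum must be reconciled with the $(-x)^{\ell}$ signs so that the net sign emerging after extraction is exactly $(-1)^{j}$. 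An alternative, fully elementary route would be induction on $j$ using the Catalan recurrence $C_{j+1}=\sum_{i=0}^{j}C_iC_{j-i}$ or the Pascal-type recurrence on the binomials, but I expect the generating-function extraction to be shorter provided the sign accounting is done carefully at the single step where coefficients of $x^{j}$ are read off.
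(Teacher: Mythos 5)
Your plan is correct and essentially reproduces the paper's own proof: the paper likewise splits off the factor $n-k$, uses the absorption $k\binom{n-1+k}{k}=n\binom{n-1+k}{k-1}$ (which is valid exactly as you stated it), and evaluates both alternating convolutions by precisely your coefficient-extraction argument, packaged there as the standalone identity $\sum_{k=0}^m(-1)^k\binom{a+k}{a}\binom{b}{m-k}=(-1)^m\binom{a-b+m}{m}$ applied with $(a,m)=(n-1,j)$ and $(a,m)=(n,j-1)$, $b=n-1-j$. For the record, with the $(-1)^k$ kept inside the sums the two pieces come out as $(-1)^j\binom{2j}{j}$ and $(-1)^j n\binom{2j}{j-1}$ --- this is exactly your anticipated global sign $(-1)^j$ --- and the combination $\binom{2j}{j}-\binom{2j}{j-1}=C_j$ closes the argument just as in the paper.
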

\begin{proof}
First, we will use the following (probably, known) identity. 
Assume $a\ge b+m$, $m\ge 0$. Then
\begin{align}\label{ident}
\sum_{k=0}^m (-1)^k\binom{a+k}{a}  \binom{b}{m-k}
=(-1)^m\binom{a-b+m}{m}.   
\end{align}
Indeed, the LHS of \eqref{ident} is the coefficient on $x^{a+m}$ in
\begin{align*}
&\sum_{u=0}^{b} \sum_{\ell=a}^\infty \binom{\ell }{a} \binom{b}{u}  (-1)^{\ell-a} x^{u+\ell}
=\left[\sum_{u=0}^{b} \binom{b}{u} x^u\right]\times
\left[\sum_{\ell=a}^\infty \binom{\ell }{a} (-1)^{\ell-a} x^{\ell}\right]    
\\ & =(1+x)^{b} \times \frac{x^a}{(1+x)^{a+1}}=x^a (1+x)^{-(a-b+1)}
=x^a\sum_{m=0}^\infty (-1)^m \binom{a-b+m}{m} x^m.
\end{align*}

Next,
\begin{align*}
\sum_{k=0}^j   \binom{n-1+k}k  \binom{n-1-j}{j-k} (-1)^k (-k)
&=n\sum_{k=0}^{j-1} (-1)^k  \binom{n+k}{k}  \binom{n-1-j}{j-1-k}.
\end{align*}
We will now use twice the identity proven above.
Setting $a=n-1$, $m=j$  ($a=n$, $m=j-1$ respectively) and $b=n-1-j$  in~\eqref{ident} gives
\begin{align*}
\sum_{k=0}^j (-1)^k\binom{n-1+k}{k}  \binom{n-1-j}{j-k}&=(-1)^j\binom{2j}{j};\\
\sum_{k=0}^{j-1} (-1)^k\binom{n+k}{k}  \binom{n-1-j}{j-1-k}&=(-1)^{j-1}\binom{2j}{j-1}
\end{align*}
which yields~\eqref{main} as $C_j=\binom{2j}{j}-\binom{2j}{j-1}$.
\end{proof}

\vspace{1cm}
Before the proof of  Theorem~\ref{T2}, we state the following elementary lemma.

\begin{lemma}\label{lemMarkov}
\begin{itemize}
    \item[(a)] 
Suppose that $\tau_X$ and $\tau_Y$ are $\Gamma(n,1)$ and $\Gamma(n,\l)$ respectively random variables, which are independent and $0<\l<1$. Then
    $$
     \P(\tau_X - \tau_Y\ge 0)\le \left(\frac{4\l}{(\l+1)^2}\right)^n\to 0\text{ as }n\to\infty;
    $$
\item[(b)] Let $\eta$ be a negative binomial random variable with parameters $m$ and $1/2$. Then
$$
\P(|\eta-m|\ge a)\le 2 \exp\left(-\frac{a^2}{4(a+m)}\right)\quad\text{for }a\ge 0.
$$    
\end{itemize}
\end{lemma}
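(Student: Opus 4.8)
The plan is to prove both parts by the exponential Markov (Chernoff) method, which is what the name of the lemma suggests and which works because in both cases the relevant moment generating functions are explicit.

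For part (a), since $\tau_X$ and $\tau_Y$ are independent, for any $s\in(0,1)$ Markov's inequality gives
$$
\P(\tau_X-\tau_Y\ge 0)=\P\big(e^{s(\tau_X-\tau_Y)}\ge 1\big)\le \E e^{s\tau_X}\,\E e^{-s\tau_Y}
=(1-s)^{-n}\Big(1+\tfrac{s}{\lambda}\Big)^{-n},
$$
using the Gamma moment generating functions (with $\lambda$ read as the \emph{rate} of $\tau_Y$; this is the convention that makes the optimiser interior). I would then minimise the right-hand side, i.e. maximise $g(s)=(1-s)(1+s/\lambda)$ on $(0,1)$. Differentiating gives the interior maximiser $s^{\ast}=(1-\lambda)/2\in(0,\tfrac12)$, at which $1-s^{\ast}=(1+\lambda)/2$ and $1+s^{\ast}/\lambda=(1+\lambda)/(2\lambda)$, so $g(s^{\ast})=(1+\lambda)^2/(4\lambda)$. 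Substituting yields exactly $\P(\tau_X-\tau_Y\ge 0)\le\big(4\lambda/(1+\lambda)^2\big)^n$, and $4\lambda/(1+\lambda)^2<1$ for $\lambda\neq 1$ since $(1+\lambda)^2-4\lambda=(1-\lambda)^2>0$ (AM--GM), so the bound tends to $0$. This is precisely the error term $(4pq)^n$ of Theorem~\ref{T2}(a) once one sets $\lambda=q/p$.

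For part (b) I would again invoke the Chernoff philosophy, but the cleanest route is to translate the negative binomial tails into binomial ones. Realising $\eta$ as the number of failures before the $m$-th success in fair Bernoulli trials (so $\E\eta=m$), the key observation is that $\{\eta\ge m+a\}=\{S_{2m+a-1}\le m-1\}$ and $\{\eta\le m-a\}=\{S_{2m-a}\ge m\}$, where $S_N\sim\mathrm{Bin}(N,1/2)$. Each event is a deviation of a bounded-increment sum from its mean by order $a$, so Hoeffding's inequality bounds the two probabilities by $\exp\!\big(-(a+1)^2/(2(2m+a-1))\big)$ and $\exp\!\big(-a^2/(4m-2a)\big)$ respectively; since $4m+2a-2\le 4(m+a)$ and $4m-2a\le 4(m+a)$, both exponents dominate $a^2/(4(a+m))$, and a union bound over the two tails supplies the factor $2$. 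Degenerate ranges (non-integer $a$, or $a>m$ where the lower tail vanishes) are handled trivially.

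The only genuinely ``mathematical'' step is the optimisation in (a); the step most likely to require care is the exact combinatorial translation in (b) together with the verification that the Hoeffding exponents are at least $a^2/(4(a+m))$ — these are routine but must be carried out with the right inequalities. Alternatively, one can bypass Hoeffding and apply exponential Markov directly to the negative binomial moment generating function $(2-e^t)^{-m}$, optimise in $t$, and bound the resulting Kullback--Leibler-type rate below by the stated quadratic; I expect this to give the same bound at the cost of messier algebra, which is why I favour the binomial reduction.
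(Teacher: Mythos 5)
Your proposal is correct, and the two parts relate differently to the paper. Part (a) is essentially the paper's proof verbatim: the paper also applies the exponential Markov bound to $\tau_X-\tau_Y$ (written as a sum of independent $\exp(1)$ and $\exp(\l)$ variables, i.e.\ the same rate convention for $\Gamma(n,\l)$ that you adopted), obtains $\inf_{t>0}\bigl(\l/((1-t)(\l+t))\bigr)^n$, and optimises at $t=(1-\l)/2$ — identical to your $s^\ast$. Part (b), however, is a genuinely different route. The paper stays with the Chernoff method throughout: it writes $\eta=\zeta_1+\dots+\zeta_m$ with i.i.d.\ Geometric$(1/2)$ summands, uses the moment generating function $(2-e^t)^{-m}$ (the alternative you mention and set aside), and bounds the resulting rate functions $f(y)$ and $\tilde f(y)$ (with $y=a/m$) below by $g(y)=y^2/(4(1+y))$ via a second-derivative comparison, e.g.\ $f''(y)=\frac{1}{(1+y)(2+y)}\ge\frac{1}{2(1+y)^2}=g''(y)$ with $f(0)=f'(0)=g(0)=g'(0)=0$. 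Your binomial reduction $\{\eta\ge m+a\}=\{S_{2m+a-1}\le m-1\}$, $\{\eta\le m-a\}=\{S_{2m-a}\ge m\}$ followed by Hoeffding is sound: the exponents $\frac{(a+1)^2}{2(2m+a-1)}$ and $\frac{a^2}{4m-2a}$ both dominate $\frac{a^2}{4(a+m)}$ since $4m+2a-2\le 4(m+a)$ and $4m-2a\le 4(m+a)$, the union bound supplies the factor $2$, and monotonicity of $a\mapsto a^2/(4(a+m))$ disposes of non-integer $a$. What your route buys is that it outsources all calculus to an off-the-shelf concentration inequality, making the verification purely arithmetic; what the paper's route buys is self-containedness (no appeal to Hoeffding, no combinatorial translation or integrality bookkeeping) and a rate bound derived directly from the exact Cram\'er rate function of the geometric sum, which is the sharper object even if here it is only needed up to the stated quadratic.
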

\begin{proof}
(a) We can write $\tau_X=\xi_1+\dots+\xi_n$, $\tau_Y=\eta_1+\dots+\eta_n$, 
where $\xi_i$s ($\eta_i$s resp.) are $\exp(1)$ i.i.d.\ random variables with rate $1$ ($\l$ resp.) and $\xi$s and $\eta_i$s are independent. Hence, by Markov's inequality,
\begin{align}\label{Cramer}
\begin{split}
\P\left(\tau_X- \tau_Y\ge 0 \right)&=\P\left(\sum_{i=1}^n (\xi_i-\eta_i) \ge 0\right)
\le \inf_{0<t<1}\P\left(e^{t\sum_{i=1}^n (\xi_i-\eta_i)}\ge 1\right)
\\ & \le \inf_{0<t<1} \E\left(e^{t\sum_{i=1}^n (\xi_i-\eta_i)}\right)
= \inf_{0<t<1} \left(\frac{\l}{(1-t)(\l+t)}\right)^n.
\end{split}
\end{align}
Noting the minimum is achieved at  $t=\frac{1-\l}{2}\in(0,1)$ finishes the proof.
\vspace{5mm}

(b) First, note that we can write $\eta$ as $\zeta_1+\zeta_2+\dots+\zeta_m$ where $\zeta_i$s are i.i.d.\ Geometric($1/2$) random variables supported on $\{0,1,2,\dots\}$. Hence, by Markov's inequality, for all $t>0$ (where the expressions below are defined) we have
\begin{align*}
\P(\eta\ge m+a)&\le e^{-t(m+a)}\E e^{t\eta}=e^{-I(t)}
\quad \text{where }I(t)=t(m+a)+m\log(2-e^t);\\
\P(\eta\le m-a)&\le e^{t(m-a)}\E e^{-t\eta}=e^{-\tilde I(t)}
\quad \text{where }\tilde I(t)=-t(m-a)+m\log\left(2-e^{-t}\right)    
\end{align*}
(note that we need to estimate the second inequality only for $a\le m$).
Maximizing $I(t)$ and $\tilde I(t)$ over $t>0$ gives 
$$
\sup_{t>0} I(t)=m\, f\left(\frac am\right),\qquad 
\sup_{t>0} \tilde I(t)=m \,\tilde f\left(\frac am\right)
$$
where
\begin{align*}
f(y)&=y\ln\left(1+\frac{y}{2+y}\right)+\ln\left(1-\frac{y^2}{(2+y)^2}\right);\\
\tilde f(y)&=(1-y)\ln(1-y) +(2-y) (\ln(2)-\ln(2-y)).
\end{align*}
Let $g(y)=\frac{y^2}{4(y+1)}$. 
Since $f(0)=f'(0)=g(0)=g'(0)=0$ and 
$$
f''(y)=\frac1{(1+y)(2+y)}\ge \frac1{2(1+y)^3}=g''(y)
$$
we have $f(y)\ge g(y)$ and thus $\sup_{t>0} I(t)\ge m g(a/m)=\frac{a^2}{4(a+m)}$.

Similarly, $\tilde f(0)=\tilde f'(0)=0$ and $\tilde f''(y)=\frac{1}{(y-1)(y-2)}\ge 1/2$ for $y\in[0,1]$, so  $\tilde f(y)\ge y^2/4$ and thus $\sup_{t>0}\tilde I(t)\ge \frac{a^2}{4m}\ge \frac{a^2}{4(m+a)}$. From this the statement follows.
\end{proof}

% some typos corrected
\begin{proof}[Proof of Theorem~\ref{T2}] 
We separately consider the cases when $p=1/2$ and when $p>1/2$. We will use the Poisson process representation, similar to Rubin's construction in~\cite{BD} or the one in~\cite{KV}.

Let $\l=q/p\le 1$. Consider two independent Poisson processes, $X(t)$ with rate $1$, and $Y(t)$ with rate $\l$. Let 
$$
\tau_X=\tau_X^{(n)}=\min\{t:\ X_t=n\},\quad 
\tau_Y=\tau_Y^{(n)}=\min\{t:\ Y_t=n\}, \quad \tau=\tau^{(n)}=\min(\tau_X,\tau_Y).
$$
Then $Z=X(\tau)-Y(\tau)$ has the same distribution as our quantity of interest $Z_{n,p}$. Indeed, the embedded Markov chain for the process $(X(t),Y(t))$, $t\ge 0$, has exactly the same transition probabilities as in the game. Also note that $\P(\tau_X=\tau_Y)=0$, and thus we can write
\begin{align}\label{eqPois}
 Z_{n,p}=[n-Y(\tau_X)]\mathbf{1}_{\tau_X<\tau_Y}+[X(\tau_Y)-n]\mathbf{1}_{\tau_X\ge \tau_Y}.
\end{align}

First, let us deal with the case $p>1/2$, i.e. $\l<1$. In this case
% replaced from \le as it was typo in printed version
$$
\P\left(\tau_X^{(n)}\ge  \tau_Y^{(n)}\right)\le (4pq)^n \to 0 \text{ as }n\to \infty.
$$
by part (a) of Lemma~\ref{lemMarkov}, since $p>1/2$ implies $4pq=1-(2p-1)^2<1$.
Since
\begin{align*}
\P(W_{n,p}=k)&=\P(Y(\tau_X)=k, \tau_X<\tau_Y)+
\P(X(\tau_Y)=k, \tau_X>\tau_Y)
\\ & =\P(Y(\tau_X)=k)-\P(Y(\tau_X)=k, \tau_X>\tau_Y)
+\P(X(\tau_Y)=k, \tau_X>\tau_Y)
\end{align*}
from~\eqref{Cramer} we obtain
\begin{align}\label{eqZ}
\left|\P(W_{n,p}=k)-\P(Y(\tau_X)=k)\right| 
\le \P(\tau_X>\tau_Y)\le (4pq)^n.   
\end{align}
Now, the only remaining problem is to estimate the distribution of~$Y(\tau_X)$. This can be computed explicitly, since $\tau_X\sim\Gamma(n,1)$:
\begin{align*}
\begin{split}   
 \P(Y(\tau_X)=k)&=\E \left[\P(Y(\tau_X)=k\mid \tau_X)\right]
 =\int_0^\infty \frac{(\l u)^k e^{-\l u}}{k!}\cdot \frac{u^{n-1}e^{-u}}{(n-1)!}du\\ &
=\frac{\l^k}{(\l+1)^{n+k}}\frac{ (n+k-1)!}{k ! (n-1) !}
= \binom{k+n-1}{k}  q^k  p^n,\quad k\in\Z_+,
\end{split}
\end{align*}
which is a negative binomial distribution with mean $\l n$ and variance $\l n/p$ (this can also be obtained directly using the coin tossing model for the negative binomial distribution). Combining this with \eqref{eqZ} finishes the proof of part (a) of the Theorem in case $p>1/2$. Finally, when \(p=1/2\), part (a) is immediate from the definition of \(\alpha_{n,p,k}\), since then~\((4pq)^n=1\).
\vspace{5mm}

Next, continuing with $p>1/2$ case, by the Central Limit Theorem applied to the negative binomial distribution, for any $a,b\in \R$ with $a<b$ we get 
\begin{align}\label{eqXY}
\lim_{n\to\infty}\P\left(a\le \frac{Y(\tau_X)-\l n}{\sqrt{\l n/p}}\le b \right)
= \Phi(b)-\Phi(a)
\end{align}
where $\Phi(\cdot)$ is the cumulative distribution function of the standard normal distribution.
At the same time, as $n\to\infty$,
\begin{align*}
\left|\P\left(\frac{Y(\tau_X)-\l n}{\sqrt{\l n/p}}\in[a,b] \right)
-\P\left(\frac{W_{n,p}-\l n}{\sqrt{\l n/p}}\in[a, b] \right)
\right|
&\le \left[\sqrt{\frac{\l n}p}(b-a)+1\right](4pq)^n\to 0
\end{align*}
by~\eqref{eqZ}. This, together with~\eqref{eqXY}, implies the CLT for $W_{n,p}$ and hence also for $Z_{n,p}$, since
\begin{align*}
\P\left(Z_{n,p}\in[a,b]\right)&=\P\left(n-W_{n,p}\in[a,b],\tau_X<\tau_Y\right)
+\P\left(W_{n,p}-n\in[a,b],\tau_X>\tau_Y\right),
\\
\P\left(n-W_{n,p}\in[a,b]\right)
&=\P\left(n-W_{n,p}\in[a,b],\tau_X<\tau_Y\right)+\P\left(n-W_{n,p}\in[a,b],\tau_X>\tau_Y\right)
\end{align*}
together yield
\begin{equation}\label{eqZZWW}
\left|\P\left(Z_{n,p}\in[a,b]\right)-\P\left(n-W_{n,p}\in[a,b]\right)
\right|\le 2\, \P\left(\tau_X>\tau_Y\right)\le 2\, (4pq)^n\to 0   
\end{equation}
as $n\to\infty$.

\vspace{1cm}

Now, assume $p=1/2$. Here, both Poisson processes are of rate $1$, and in this case, each $\tau_X$ and $\tau_Y$ is equally likely to be the minimum one. From~\eqref{eqPois},
\begin{align*}
Z_{n,p}&=[n-Y(\tau_X)](1-\mathbf{1}_{\tau_X\ge \tau_Y})+[X(\tau_Y)-n]\mathbf{1}_{\tau_X\ge \tau_Y}
\\
&=n-Y(\tau_X)+\left[X(\tau_Y)+Y(\tau_X)-2n\right]\mathbf{1}_{\tau_X\ge \tau_Y}.
\end{align*}
Similarly to the case $p>1/2$, we know that $Y(\tau_X)$ is a negative binomial, and thus by the CLT (see~\eqref{eqXY})
$$
\frac{n-Y(\tau_X)}{\sqrt{2n}}\to\mathcal{N}(0,1)\qquad \text{in distribution}.
$$
We will show now that
\[
\frac{\zeta_n\mathbf 1_{\{\tau_X\ge \tau_Y\}}}{\sqrt n}\to 0
\quad\text{in probability, where}\quad
\zeta_n:=X(\tau_Y)+Y(\tau_X)-2n,
\]
thus yielding the desired result.

\begin{figure}
    \centering
    \includegraphics[width=0.6\linewidth]{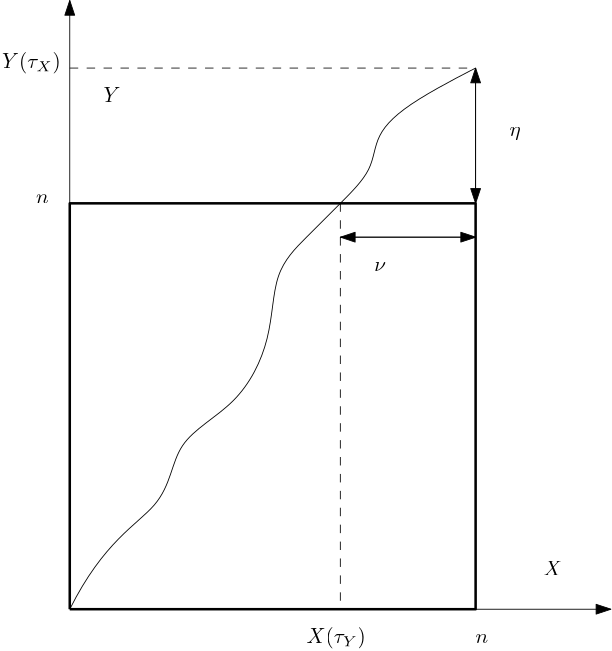}
    \caption{A path of the process in case $\tau_Y<\tau_X$.}
    \label{fig1}
\end{figure}
Indeed, since 
$$
\{\tau_Y<\tau_X\}=\{X(\tau_Y)=n-k \text{ for some }k\ge 1\}
$$
from the strong Markov property it follows that the behaviour of the processes $X(t), Y(t)$ for $t>\tau_Y$ is independent of the past, hence conditionally on $\nu=n-X(\tau_Y)\ge 1$ the distribution of $\eta:=Y(\tau_X)-Y(\tau_Y)\equiv Y(\tau_X)-n$ is negative binomial with parameters~$\nu$ and~$1/2$ (see also Figure~\ref{fig1}). By part (b) of Lemma~\ref{lemMarkov} 
$$
\P(|\eta-\nu|>n^{2/5}\mid \nu )
\le 2 \exp\left(-\frac{n^{4/5}}{4(\nu+n^{2/5})}\right) 
\le 2 \exp\left(-\frac{n^{2/15}}8\right) 
\text{ on }\nu< n^{2/3}.
$$
Now, $X(\tau_Y)$ is also a negative binomial with parameters $n$ and $1/2$, so again by part (b) of Lemma~\ref{lemMarkov}
$$
\P(|\nu|\ge n^{2/3})=\P(|X(\tau_Y)-n|\ge n^{2/3})
\le2 \exp\left(-\frac{ n^{4/3}}{4(n+n^{2/3})}\right)
\le2 \exp\left(-\frac{ n^{1/3}}{8}\right).
$$
Hence
\begin{align*}
\P\left(|\eta-\nu|\ge n^{2/5},\,\tau_Y<\tau_X\right)
&\le
\P\left(|\eta-\nu|\ge n^{2/5},\,1\le \nu<n^{2/3}\right)
+\P\left(\nu\ge n^{2/3},\,\tau_Y<\tau_X\right)
\\
&\le
2\exp\left(-\frac{n^{2/15}}8\right)
+
2\exp\left(-\frac{n^{1/3}}8\right)
\to0.
\end{align*}
On the event \(\tau_Y<\tau_X\), we have
\[
\eta-\nu=(Y(\tau_X)-n)-(n-X(\tau_Y))
=Y(\tau_X)+X(\tau_Y)-2n=\zeta_n.
\]
Since \(n^{2/5}=o(\sqrt n)\), this proves
\[
\frac{\zeta_n\mathbf 1_{\{\tau_X\ge \tau_Y\}}}{\sqrt n}\to0
\]
in probability, as required.
\end{proof}

\end{document}